\documentclass[final,1p,times]{elsarticle}
\usepackage{amsmath,amssymb,amsthm,mathrsfs}
\usepackage{geometry}
\usepackage{enumitem}
\usepackage{hyperref}
\newtheorem{thm}{Theorem}[section]

\newtheorem{rem}[thm]{Remark}
\newtheorem{prop}[thm]{Proposition}
\newtheorem{cor}[thm]{Corollary}

\newtheorem{example}[thm]{Example}
\newdefinition{defn}[thm]{Definition}
\allowdisplaybreaks

\newcommand{\BH}{\mathcal{B}(\mathcal{H})}
\newcommand{\norm}[1]{\left\lVert #1 \right\rVert}

\journal{Mathematica Slovaca}
\begin{document}
	\begin{frontmatter}
	
		\title{  Refined upper bounds for the numerical radius via weighted operator means}
		\author{Shankhadeep Mondal}
		\ead{shankhadeep.mondal@ucf.edu}
		\author{Ram Narayan Mohapatra}
		\ead{Ram.Mohapatra@ucf.edu}
        \author{Kasun Tharuka Dewage}
		\ead{KasunTharuka.Dewage@ucf.edu}

		\address{School of Mathematics, University of Central Florida, Orlando, Florida-32816}
		\cortext[shan]{Corresponding Author:shankhadeep.mondal@ucf.edu}	
		
\begin{abstract}
We establish a parameterized family of upper bounds for the numerical radius of bounded linear operators on a complex Hilbert space, based on weighted expressions involving the modulus of an operator and its adjoint. The proposed estimates encompass a known numerical-radius inequality as a particular symmetric case, while optimization over the weight parameter provides additional flexibility and can lead to sharper bounds. Under suitable structural assumptions on the associated auxiliary operators, we further derive corresponding spectral-radius estimates. The weighted approach is also extended to $2\times2$ off-diagonal operator matrices. Finally, we examine sharpness and
equality cases and provide explicit finite-dimensional examples
illustrating the obtained estimates.
\end{abstract}

		\begin{keyword}
			Numerical radius, operator norm, spectral radius, operator matrices.
			\MSC[2010] 47A12, 47A30
		\end{keyword}
		
	\end{frontmatter}
	
\section{Introduction}

Let $\mathcal H$ be a complex Hilbert space and let $\BH$ denote the algebra
of all bounded linear operators on $\mathcal H$. For $A\in\BH$, the numerical
range $W(A)$ and the numerical radius $w(A)$ are defined, respectively, by
\[
W(A)=\{\langle Ax,x\rangle:x\in\mathcal H,\ \|x\|=1\},
\qquad
w(A)=\sup\{|\lambda|:\lambda\in W(A)\}.
\]
It is well known that $w(\cdot)$ defines a norm on $\BH$ equivalent to the
operator norm and satisfies
\begin{equation}\label{eq:basic}
\frac12\|A\|\leq w(A)\leq\|A\|,
\qquad A\in\BH.
\end{equation}
Moreover, the spectral radius satisfies $r(A)\leq w(A)$, while
$w(A)=\|A\|$ whenever $A$ is normal; see, for example,
\cite{HornJohnson}.

Considerable attention has been devoted to refining \eqref{eq:basic} and
obtaining sharper upper bounds for the numerical radius. Fundamental
contributions in this direction were obtained by Kittaneh, who established
several inequalities involving $|A|=(A^*A)^{1/2}$ and $|A^*|$; see
\cite{Kittaneh1,Kittaneh2,KittanehNorm}. In particular, one has
\begin{equation}\label{eq:kittaneh}
w^2(A)
\leq
\frac12\big\||A|^2+|A^*|^2\big\|.
\end{equation}
Further refinements of numerical-radius inequalities have subsequently been
obtained by incorporating operator norms, higher powers, and additional
mixed terms. We refer, in particular, to
\cite{AbuOmarKittaneh,Dragomir,BhuniaPaulBSM,BhuniaPaulArch,
BhuniaPaulResults,BhuniaPaulLMA,Kato}
and the references therein. Numerical-radius estimates for operator matrices
have also been extensively investigated; see, for example,
\cite{Alomari,BhuniaPaulLMA}. Related aspects of numerical ranges, spectral
properties, and unitary invariance may be found in
\cite{FongHolbrook,HornJohnson}.

Of particular relevance to the present work is the following estimate: for
$r\geq1$,
\begin{equation}\label{eq:bhunia-paul}
w^{2r}(A)
\leq
\frac14\big\||A|^{2r}+|A^*|^{2r}\big\|
+
\frac12\,w\!\left(|A|^r|A^*|^r\right).
\end{equation}
This inequality combines a symmetric expression involving $|A|$ and $|A^*|$
with a numerical-radius term involving their product.

\medskip

\noindent\textbf{Motivation and contribution.}
The estimate \eqref{eq:bhunia-paul} treats $|A|$ and $|A^*|$ symmetrically.
This motivates the introduction of a parameter that allows the contributions
of these two positive operators to be weighted differently. More precisely,
for $r\geq1$ and $0\leq\theta\leq1$, we establish the estimate
\[
w^{2r}(A)
\leq
\frac14
\big\|
|A|^{4r\theta}
+
|A^*|^{4r(1-\theta)}
\big\|
+
\frac12
w\!\left(
|A|^{2r\theta}|A^*|^{2r(1-\theta)}
\right).
\]
The symmetric choice $\theta=\frac12$ recovers
\eqref{eq:bhunia-paul}. Thus, the parameter $\theta$ provides a family of
numerical-radius estimates containing the symmetric inequality as a
particular case. This additional flexibility also makes it natural to
consider optimization of the resulting upper bound with respect to
$\theta$.

We further investigate spectral-radius formulations of the weighted estimate
under suitable structural assumptions. In particular, when the relevant
auxiliary weighted product is normal, its numerical radius coincides with its
spectral radius, allowing the corresponding term to be expressed in spectral
form. The weighted approach is also extended to $2\times2$ off-diagonal
operator matrices. Finally, we study sharpness and equality behavior and
provide finite-dimensional examples illustrating both equality and strict
inequality.

\medskip

\noindent\textbf{Comparison with existing bounds.}
The main features of the weighted approach and its relation to the symmetric
estimate may be summarized as follows.

\medskip

\begin{center}
\renewcommand{\arraystretch}{1.3}
\begin{tabular}{|p{0.42\textwidth}|p{0.48\textwidth}|}
\hline
\textbf{Existing framework}
&
\textbf{Present framework}
\\
\hline
Symmetric powers $|A|^{2r}$ and $|A^*|^{2r}$
&
Weighted powers $|A|^{4r\theta}$ and
$|A^*|^{4r(1-\theta)}$
\\
\hline
Numerical-radius term
$w(|A|^r|A^*|^r)$
&
Weighted numerical-radius term
$w\!\left(|A|^{2r\theta}|A^*|^{2r(1-\theta)}\right)$
\\
\hline
Fixed symmetric choice
&
Parameterized family of bounds with
$0\leq\theta\leq1$
\\
\hline
Symmetric estimate \eqref{eq:bhunia-paul}
&
Recovered by taking $\theta=\frac12$
\\
\hline
Numerical-radius formulation
&
Spectral-radius formulation under suitable structural assumptions
\\
\hline
\end{tabular}
\end{center}

\medskip

The paper is organized as follows. In Section~2, we establish the main
weighted numerical-radius inequality and discuss optimization with respect
to the parameter $\theta$. Section~3 considers spectral-radius formulations
under appropriate structural assumptions. In Section~4, the weighted
estimates are extended to off-diagonal $2\times2$ operator matrices.
Section~5 discusses sharpness and equality cases, while Section~6 presents
finite-dimensional examples illustrating the obtained estimates.

\section{Weighted Geometric Mean Inequality}

In this section, we introduce a new refinement of numerical radius inequalities
based on \emph{weighted geometric means} of the positive operators $|A|$ and
$|A^*|$.  
The motivation for this approach comes from the observation that most existing
upper bounds for the numerical radius involve either symmetric expressions such
as $|A|^r|A^*|^r$ or convex combinations of $|A|^{2r}$ and $|A^*|^{2r}$.  
While these bounds are sharp in certain extremal cases (e.g.\ normal or
nilpotent operators), they do not fully exploit the flexibility allowed by
interpolating between $|A|$ and $|A^*|$. By introducing a weight parameter $\theta\in[0,1]$, we obtain a
parameterized family of upper bounds that contains the corresponding symmetric inequality as the special case $\theta=\frac12$.

\medskip

We begin with our main result of this section.

\begin{thm}\label{thm:main}
Let $A\in\mathcal{B}(\mathcal{H})$, $r\geq 1$, and
$0\leq \theta\leq 1$. Then
\[
w^{2r}(A)
\leq
\frac14
\left\|
|A|^{4r\theta}
+
|A^*|^{4r(1-\theta)}
\right\|
+
\frac12
w\!\left(
|A|^{2r\theta}
|A^*|^{2r(1-\theta)}
\right).
\]
\end{thm}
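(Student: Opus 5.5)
The plan is to follow the standard route behind the symmetric estimate \eqref{eq:bhunia-paul}: apply a weighted mixed Schwarz inequality, then raise to the power $r$ with a Hölder--McCarthy step, and finally apply Buzano's refinement of the Cauchy--Schwarz inequality. The parameter $\theta$ enters only through the choice of weight in the mixed Schwarz inequality.

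Fix a unit vector $x\in\mathcal H$. By Kato's mixed Schwarz inequality with exponent $\theta$,
\[
|\langle Ax,x\rangle|^2\le \langle |A|^{2\theta}x,x\rangle\,\langle |A^*|^{2(1-\theta)}x,x\rangle .
\]
At the endpoints $\theta\in\{0,1\}$ this is just the Cauchy--Schwarz inequality, e.g.\ $|\langle Ax,x\rangle|^2\le\|Ax\|^2=\langle |A|^2x,x\rangle$, with the convention $|A|^0=I$.

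Next, raise both sides to the power $r\ge1$. For a positive operator $T$, a unit vector $x$ and $r\ge1$, the Hölder--McCarthy inequality gives $\langle Tx,x\rangle^r\le\langle T^rx,x\rangle$. Applying it to both factors yields
\[
|\langle Ax,x\rangle|^{2r}\le \langle Px,x\rangle\langle x,Qx\rangle,
\qquad P=|A|^{2r\theta},\quad Q=|A^*|^{2r(1-\theta)} ,
\]
where both $P$ and $Q$ are positive.

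Now I would invoke Buzano's inequality in the form $|\langle a,e\rangle\langle e,b\rangle|\le\frac12\big(\|a\|\|b\|+|\langle a,b\rangle|\big)$ for a unit vector $e$. Taking $a=Px$, $b=Qx$, $e=x$ gives
\[
\langle Px,x\rangle\langle x,Qx\rangle\le\tfrac12\big(\|Px\|\,\|Qx\|+|\langle QPx,x\rangle|\big).
\]
The product of norms is handled by AM--GM:
\[
\|Px\|\,\|Qx\|\le\tfrac12\big(\|Px\|^2+\|Qx\|^2\big)=\tfrac12\langle (P^2+Q^2)x,x\rangle .
\]
Here $P^2=|A|^{4r\theta}$ and $Q^2=|A^*|^{4r(1-\theta)}$, and $\langle (P^2+Q^2)x,x\rangle\le\|P^2+Q^2\|$.

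For the remaining term, $|\langle QPx,x\rangle|\le w(QP)$. Since $(QP)^*=PQ$ and $w(T^*)=w(T)$, we have $w(QP)=w\!\left(|A|^{2r\theta}|A^*|^{2r(1-\theta)}\right)$. Combining the three estimates gives
\[
|\langle Ax,x\rangle|^{2r}\le\tfrac14\|P^2+Q^2\|+\tfrac12 w(PQ),
\]
and taking the supremum over unit vectors $x$ yields the theorem.

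The main point requiring care is the choice of weight. The mixed Schwarz exponent must be exactly $\theta$, so that after the $r$-th power the operators $P$ and $Q$ have exponents $2r\theta$ and $2r(1-\theta)$, and their squares in the Buzano step produce $4r\theta$ and $4r(1-\theta)$. The second point to check is that Hölder--McCarthy is applied to the positive operators $|A|^{2\theta}$ and $|A^*|^{2(1-\theta)}$, which is where the hypothesis $r\ge1$ is used.
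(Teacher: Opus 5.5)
Your proposal is correct and follows essentially the same route as the paper's proof: Kato's mixed Schwarz inequality with weight $\theta$, the McCarthy inequality for $r\geq1$, Buzano's inequality applied to $|A|^{2r\theta}x$ and $|A^*|^{2r(1-\theta)}x$, AM--GM on the product of norms, and $w(T)=w(T^*)$ to pass from $QP$ to $PQ$. Your treatment of the endpoints $\theta\in\{0,1\}$ via the convention $|A|^0=I$ is a small detail the paper leaves implicit.
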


\begin{proof}
Let $x\in\mathcal{H}$ with $\|x\|=1$. By Kato's generalized
mixed Schwarz inequality, for every $0\leq\theta\leq1$,
\[
|\langle Ax,x\rangle|^2
\leq
\langle |A|^{2\theta}x,x\rangle
\langle |A^*|^{2(1-\theta)}x,x\rangle.
\]
Raising both sides to the power $r\geq1$, we obtain
\[
|\langle Ax,x\rangle|^{2r}
\leq
\langle |A|^{2\theta}x,x\rangle^r
\langle |A^*|^{2(1-\theta)}x,x\rangle^r.
\]
Since $|A|^{2\theta}$ and $|A^*|^{2(1-\theta)}$ are positive
operators, the McCarthy inequality gives
\[
\langle |A|^{2\theta}x,x\rangle^r
\leq
\langle |A|^{2r\theta}x,x\rangle
\]
and
\[
\langle |A^*|^{2(1-\theta)}x,x\rangle^r
\leq
\langle |A^*|^{2r(1-\theta)}x,x\rangle.
\]
Consequently,
\[
|\langle Ax,x\rangle|^{2r}
\leq
\langle |A|^{2r\theta}x,x\rangle
\langle |A^*|^{2r(1-\theta)}x,x\rangle.
\]

Set
\[
u=|A|^{2r\theta}x,
\qquad
v=|A^*|^{2r(1-\theta)}x.
\]
Since $\|x\|=1$, Buzano's inequality yields
\[
|\langle u,x\rangle\langle x,v\rangle|
\leq
\frac12
\left(
\|u\|\,\|v\|
+
|\langle u,v\rangle|
\right).
\]
Therefore,
\[
\begin{aligned}
|\langle Ax,x\rangle|^{2r}
&\leq
\frac12
\Big(
\|\,|A|^{2r\theta}x\|
\,
\|\,|A^*|^{2r(1-\theta)}x\|
\\
&\hspace{2.5cm}
+
\big|
\langle
|A|^{2r\theta}x,
|A^*|^{2r(1-\theta)}x
\rangle
\big|
\Big).
\end{aligned}
\]
Using the arithmetic--geometric mean inequality,
\[
\sqrt{ab}\leq\frac{a+b}{2},
\qquad a,b\geq0,
\]
we have
\[
\begin{aligned}
&\frac12
\|\,|A|^{2r\theta}x\|
\,
\|\,|A^*|^{2r(1-\theta)}x\|
\\
&\qquad\leq
\frac14
\left(
\langle |A|^{4r\theta}x,x\rangle
+
\langle |A^*|^{4r(1-\theta)}x,x\rangle
\right).
\end{aligned}
\]
Moreover,
\[
\begin{aligned}
&
\big|
\langle
|A|^{2r\theta}x,
|A^*|^{2r(1-\theta)}x
\rangle
\big|
\\
&\qquad=
\big|
\langle
|A^*|^{2r(1-\theta)}
|A|^{2r\theta}x,x
\rangle
\big|.
\end{aligned}
\]
Hence
\[
\begin{aligned}
|\langle Ax,x\rangle|^{2r}
&\leq
\frac14
\left\langle
\left(
|A|^{4r\theta}
+
|A^*|^{4r(1-\theta)}
\right)x,x
\right\rangle
\\
&\quad+
\frac12
\left|
\left\langle
|A^*|^{2r(1-\theta)}
|A|^{2r\theta}x,x
\right\rangle
\right|.
\end{aligned}
\]
Taking the supremum over all unit vectors $x\in\mathcal{H}$ gives
\[
w^{2r}(A)
\leq
\frac14
\left\|
|A|^{4r\theta}
+
|A^*|^{4r(1-\theta)}
\right\|
+
\frac12
w\!\left(
|A^*|^{2r(1-\theta)}
|A|^{2r\theta}
\right).
\]
Finally, since $w(T)=w(T^*)$,
\[
w\!\left(
|A^*|^{2r(1-\theta)}
|A|^{2r\theta}
\right)
=
w\!\left(
|A|^{2r\theta}
|A^*|^{2r(1-\theta)}
\right),
\]
and the desired inequality follows.
\end{proof}
\medskip

The following corollaries record some immediate consequences of
Theorem~\ref{thm:main} and its relation to the corresponding symmetric estimate.

\begin{cor}\label{cor:theta-half}
Let $A\in\BH$ and $r\geq 1$. Then
\[
w^{2r}(A)
\leq
\frac14
\norm{|A|^{2r}+|A^*|^{2r}}
+
\frac12\,
w\!\left(|A|^r|A^*|^r\right).
\]
\end{cor}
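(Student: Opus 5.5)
This corollary is the specialization $\theta=\tfrac12$ of Theorem~\ref{thm:main}, so the plan is simply to substitute this value and simplify the exponents; no new inequality is needed.

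Fix $A\in\BH$ and $r\geq1$, and apply Theorem~\ref{thm:main} with $\theta=\tfrac12$, which is admissible since $0\leq\tfrac12\leq1$. The exponents then become
\[
4r\theta=2r,\qquad 4r(1-\theta)=2r,\qquad 2r\theta=r,\qquad 2r(1-\theta)=r .
\]
The norm term therefore reads $\frac14\norm{|A|^{2r}+|A^*|^{2r}}$, and the numerical-radius term reads $\frac12\,w\!\left(|A|^r|A^*|^r\right)$. This is exactly the displayed inequality.

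To be safe, I would check that the powers are the ones meant in the statement. Since $|A|$ and $|A^*|$ are positive operators, the powers are defined by the continuous functional calculus. This calculus satisfies $(|A|^{2\theta})^{2r}$-type identities, so no ambiguity arises when the exponents are written as $2r$ or $r$.

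There is no real obstacle here. The only point worth a remark is that the resulting bound coincides with \eqref{eq:bhunia-paul}, which confirms that the weighted family of Theorem~\ref{thm:main} contains the symmetric estimate as the case $\theta=\tfrac12$.
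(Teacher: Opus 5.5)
Your proposal is correct and is exactly the paper's argument: applying Theorem~\ref{thm:main} with $\theta=\tfrac12$ turns the exponents $4r\theta,\ 4r(1-\theta),\ 2r\theta,\ 2r(1-\theta)$ into $2r,\ 2r,\ r,\ r$, which is the stated bound. Your remark about functional-calculus identities is unnecessary, because after substitution the powers are literally $|A|^{2r}$, $|A^*|^{2r}$, $|A|^r$, $|A^*|^r$, and no composition of powers is involved.
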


\begin{proof}
The result follows immediately from Theorem~\ref{thm:main} by choosing $\theta=\tfrac12$.

\end{proof}

\begin{rem}
For $\theta=\frac12$, Theorem~\ref{thm:main} reduces to
Corollary~\ref{cor:theta-half}, which is the corresponding
Bhunia--Paul inequality. Thus, Theorem~\ref{thm:main} provides a
one-parameter extension of this symmetric estimate.
\end{rem}

\medskip

We next obtain a simpler, but sometimes more practical, estimate by bounding the
numerical radius term.

\begin{cor}\label{cor:norm-bound}
Let $A\in\BH$, $r\geq 1$, and $0\leq\theta\leq 1$. Then
\[
w^{2r}(A)
\leq
\frac14
\norm{
|A|^{4r\theta}
+
|A^*|^{4r(1-\theta)}
}
+
\frac12
\norm{
|A|^{2r\theta}
|A^*|^{2r(1-\theta)}
}.
\]
\end{cor}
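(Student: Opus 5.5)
The plan is to derive Corollary~\ref{cor:norm-bound} directly from Theorem~\ref{thm:main} using the elementary estimate $w(T)\le\|T\|$ from \eqref{eq:basic}. Fix $r\ge1$ and $\theta\in[0,1]$, and set
\[
T=|A|^{2r\theta}|A^*|^{2r(1-\theta)}\in\BH.
\]
This is a bounded operator, being a product of bounded positive operators obtained by continuous functional calculus. Theorem~\ref{thm:main} then gives
\[
w^{2r}(A)\le \frac14\norm{|A|^{4r\theta}+|A^*|^{4r(1-\theta)}}+\frac12\,w(T).
\]

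Next, apply the right-hand inequality in \eqref{eq:basic} to $T$. It gives $w(T)\le\|T\|$, which also follows at once from the Cauchy--Schwarz bound $|\langle Tx,x\rangle|\le\|Tx\|\,\|x\|\le\|T\|$ for unit vectors $x$. Since the coefficient $\frac12$ is nonnegative, replacing $w(T)$ by $\|T\|$ only enlarges the right-hand side. This yields exactly the stated inequality.

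The boundary values $\theta=0$ and $\theta=1$ deserve a remark. There the exponent $0$ is interpreted as $|A|^0=I$, consistent with the convention already used in Theorem~\ref{thm:main}, so nothing extra is needed. I expect no real obstacle: the only point is to keep the operator in the numerical-radius term of Theorem~\ref{thm:main} identical to the one appearing in the norm term here. That is the case in the statement as given.
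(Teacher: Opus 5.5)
Your proposal is correct and matches the paper's proof: apply Theorem~\ref{thm:main}, then use $w(T)\leq\norm{T}$ on the term $|A|^{2r\theta}|A^*|^{2r(1-\theta)}$.
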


\begin{proof}
By Theorem~\ref{thm:main},
\[
w^{2r}(A)
\leq
\frac14
\norm{
|A|^{4r\theta}
+
|A^*|^{4r(1-\theta)}
}
+
\frac12
w\!\left(
|A|^{2r\theta}
|A^*|^{2r(1-\theta)}
\right).
\]
Since $w(T)\leq\norm{T}$ for every $T\in\BH$, we obtain
\[
w^{2r}(A)
\leq
\frac14
\norm{
|A|^{4r\theta}
+
|A^*|^{4r(1-\theta)}
}
+
\frac12
\norm{
|A|^{2r\theta}
|A^*|^{2r(1-\theta)}
},
\]
which proves the result.
\end{proof}

\medskip

\noindent We next consider the optimization of the weighted estimate with respect to the parameter $\theta$.

\begin{prop}\label{prop:optimized-weight}
Let $A\in\BH$ be invertible and let $r\geq1$. Define
\[
\Phi_{A,r}(\theta)
=
\frac14
\norm{
|A|^{4r\theta}
+
|A^*|^{4r(1-\theta)}
}
+
\frac12
w\!\left(
|A|^{2r\theta}
|A^*|^{2r(1-\theta)}
\right),
\qquad 0\leq\theta\leq1.
\]
Then
\[
w^{2r}(A)
\leq
\min_{0\leq\theta\leq1}\Phi_{A,r}(\theta)
\leq
\Phi_{A,r}\!\left(\frac12\right),
\]
where
\[
\Phi_{A,r}\!\left(\frac12\right)
=
\frac14\norm{|A|^{2r}+|A^*|^{2r}}
+
\frac12 w\!\left(|A|^r|A^*|^r\right).
\]
Moreover, if $\Phi_{A,r}(\theta_0)
<
\Phi_{A,r}\!\left(\frac12\right),$
for some $\theta_0\in[0,1]$, then the optimized weighted estimate is
strictly sharper than the symmetric estimate corresponding to
$\theta=\frac12$.
\end{prop}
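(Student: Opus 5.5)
The first inequality is Theorem~\ref{thm:main} applied for each $\theta$. That theorem gives $w^{2r}(A)\le\Phi_{A,r}(\theta)$ for every $\theta\in[0,1]$, hence $w^{2r}(A)\le\inf_{0\le\theta\le1}\Phi_{A,r}(\theta)$. The second inequality is trivial, since $\theta=\tfrac12$ is admissible. Substituting $\theta=\tfrac12$ gives $4r\theta=4r(1-\theta)=2r$ and $2r\theta=2r(1-\theta)=r$, which yields the displayed formula for $\Phi_{A,r}(\tfrac12)$ and matches Corollary~\ref{cor:theta-half}. The final assertion is then a restatement: $\min\Phi_{A,r}\le\Phi_{A,r}(\theta_0)<\Phi_{A,r}(\tfrac12)$.

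The only real content is that the infimum is attained, so that writing $\min$ is legitimate. I would prove that $\Phi_{A,r}$ is continuous on the compact interval $[0,1]$. This is where invertibility enters. If $A$ is invertible, then $|A|$ and $|A^*|$ are positive and invertible, so their spectra lie in some interval $[m,M]$ with $m>0$. For fixed $s\ge0$, the map $t\mapsto \lambda^{st}$ is jointly continuous on $[m,M]\times[0,1]$, hence uniformly continuous there, since this set is compact. Consequently $\sup_{\lambda\in[m,M]}|\lambda^{st}-\lambda^{st'}|\to0$ as $t'\to t$. By the continuous functional calculus, $\||A|^{st}-|A|^{st'}\|=\sup_{\lambda\in\sigma(|A|)}|\lambda^{st}-\lambda^{st'}|$. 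Therefore $\theta\mapsto|A|^{4r\theta}$, $\theta\mapsto|A|^{2r\theta}$ and the analogous maps for $|A^*|$ with exponent $1-\theta$ are all norm-continuous.

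It follows that the sum $|A|^{4r\theta}+|A^*|^{4r(1-\theta)}$ is norm-continuous in $\theta$. The product $|A|^{2r\theta}|A^*|^{2r(1-\theta)}$ is also norm-continuous, because multiplication is jointly continuous on norm-bounded sets, and the factors are bounded by $\max(1,M)^{2r}$. Composing with the norm and with $w(\cdot)$, both of which are Lipschitz because $w(T)\le\|T\|$, shows that $\Phi_{A,r}$ is continuous. The minimum is therefore attained by Weierstrass' theorem.

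The main obstacle is exactly this continuity step, and it is the reason for the invertibility hypothesis. Without $m>0$, the exponent $0$ at the endpoints (we interpret $|A|^0=I$) makes $\lambda^{t}$ discontinuous at $\lambda=0$, so $\Phi_{A,r}$ could jump at $\theta=0$ or $\theta=1$. Even in that case one could still state the result with $\inf$. With invertibility, the uniform continuity argument on $[m,M]$ handles everything cleanly.
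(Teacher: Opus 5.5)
Your proposal is correct and follows essentially the same route as the paper: apply Theorem~\ref{thm:main} for each $\theta$, use invertibility to get norm continuity of $\theta\mapsto|A|^{4r\theta}$ and $\theta\mapsto|A^*|^{4r(1-\theta)}$, conclude by compactness of $[0,1]$ that the minimum is attained, and compare with the admissible value $\theta=\tfrac12$. Your uniform-continuity argument on $[m,M]$ and your explicit treatment of the product term $|A|^{2r\theta}|A^*|^{2r(1-\theta)}$ fill in details that the paper's proof of this proposition only asserts (the product term is addressed later, in Proposition~\ref{prop:rigidity}).
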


\begin{proof}
By Theorem~\ref{thm:main}, for every $\theta\in[0,1]$, $w^{2r}(A)\leq\Phi_{A,r}(\theta).$ Since $A$ is invertible, the maps
\[
\theta\longmapsto |A|^{4r\theta},
\qquad
\theta\longmapsto |A^*|^{4r(1-\theta)}
\]
are continuous in the operator norm. Consequently,
$\Phi_{A,r}$ is continuous on the compact interval $[0,1]$ and
therefore attains its minimum. Hence
\[
w^{2r}(A)
\leq
\min_{0\leq\theta\leq1}\Phi_{A,r}(\theta).
\]
Since $\frac12\in[0,1]$,
\[
\min_{0\leq\theta\leq1}\Phi_{A,r}(\theta)
\leq
\Phi_{A,r}\!\left(\frac12\right).
\]
Substituting $\theta=\frac12$ gives
\[
\Phi_{A,r}\!\left(\frac12\right)
=
\frac14\norm{|A|^{2r}+|A^*|^{2r}}
+
\frac12w\!\left(|A|^r|A^*|^r\right).
\]
The final assertion follows immediately whenever
$\Phi_{A,r}(\theta_0)<\Phi_{A,r}(\frac12)$ for some
$\theta_0\in[0,1]$.
\end{proof}

\medskip

We conclude this section with a structural remark.

\begin{rem}
The introduction of the parameter $\theta$ provides additional flexibility in
the numerical radius estimate. In particular, the weighted terms involving
$|A|$ and $|A^*|$ allow the estimate to reflect their asymmetric contributions,
whereas the choice $\theta=\tfrac12$ recovers the corresponding symmetric
inequality. Thus, optimizing over $\theta$ may yield a sharper bound for
operators whose associated weighted terms exhibit sufficient asymmetry.
\end{rem}

\section{Spectral-Radius Formulations}

In this section, we derive spectral-radius formulations of the weighted
numerical-radius estimate obtained in the previous section. Since
$r(T)\leq w(T)$ does not in general allow the numerical-radius term in
an upper bound to be replaced by the spectral radius, additional
structural assumptions are required. In particular, when the relevant
auxiliary operator is normal, its numerical radius coincides with its
spectral radius, leading to the following formulation.

\medskip

We begin with the main interpolation inequality.

\begin{prop}\label{thm:spectral}
Let $A\in\BH$, $r\geq1$, and $0\leq\theta\leq1$. Suppose that
\[
T_{\theta,r}
=
|A|^{2r\theta}|A^*|^{2r(1-\theta)}
\]
is normal. Then
\[
w^{2r}(A)
\leq
\frac14
\norm{
|A|^{4r\theta}
+
|A^*|^{4r(1-\theta)}
}
+
\frac12\,
r\!\left(
|A|^{2r\theta}|A^*|^{2r(1-\theta)}
\right).
\]
\end{prop}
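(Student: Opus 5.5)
The plan is to combine Theorem~\ref{thm:main} with the standard fact that, for a normal operator, the numerical radius, the spectral radius and the norm coincide. First, Theorem~\ref{thm:main}, applied with the given $r\geq1$ and $\theta\in[0,1]$, yields
\[
w^{2r}(A)\leq \frac14\norm{|A|^{4r\theta}+|A^*|^{4r(1-\theta)}}+\frac12\,w\!\left(T_{\theta,r}\right),
\qquad T_{\theta,r}=|A|^{2r\theta}|A^*|^{2r(1-\theta)}.
\]
No structural hypothesis is needed for this step. The only remaining task is to replace $w(T_{\theta,r})$ by $r(T_{\theta,r})$.

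Second, I will verify that $w(T)=r(T)$ for every normal $T\in\BH$. On one side, $r(T)\leq w(T)\leq\norm{T}$ holds for every bounded operator: the spectrum lies in the closure of the numerical range, and $w(T)\le \norm{T}$ follows from Cauchy--Schwarz. On the other side, normality gives $\norm{T^n}=\norm{T}^n$. This follows from the $C^*$-identity: $\norm{T^*T}=\norm{T}^2$, and $T^*T$ is selfadjoint, so $\norm{(T^*T)^{2^k}}=\norm{T^*T}^{2^k}$. Since $T$ is normal, $(T^n)^*T^n=(T^*T)^n$, and therefore $\norm{T^{2^k}}=\norm{T}^{2^k}$. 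Gelfand's formula $r(T)=\lim_n\norm{T^n}^{1/n}$, evaluated along the subsequence $n=2^k$, then gives $r(T)=\norm{T}$. The paper already records in the introduction that $w(T)=\norm{T}$ for normal $T$, so I can cite that directly and only need $r(T)=\norm{T}$. Combining, $r(T)\le w(T)\le\norm{T}=r(T)$, so all three quantities are equal.

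Applying this to the normal operator $T_{\theta,r}$ and substituting $w(T_{\theta,r})=r(T_{\theta,r})$ into the bound from Theorem~\ref{thm:main} gives the claimed inequality. There is no real obstacle here. The only delicate point is notational: the letter $r$ denotes both the exponent and the spectral radius. I will keep the two roles separate, for instance by writing $\operatorname{spr}$ in intermediate steps if needed, and I will note that the exponent enters only through the definition of $T_{\theta,r}$.
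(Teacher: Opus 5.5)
Your proposal is correct and follows the paper's proof exactly: apply Theorem~\ref{thm:main}, then use that $w(T_{\theta,r})=\|T_{\theta,r}\|=r(T_{\theta,r})$ for the normal operator $T_{\theta,r}$. Your Gelfand-formula argument via the $C^*$-identity only adds a proof of the standard fact $r(T)=\|T\|$ for normal $T$, which the paper simply asserts.
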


\begin{proof}
By Theorem~\ref{thm:main},
\[
w^{2r}(A)
\leq
\frac14
\norm{
|A|^{4r\theta}
+
|A^*|^{4r(1-\theta)}
}
+
\frac12\,
w(T_{\theta,r}).
\]
Since $T_{\theta,r}$ is normal, its numerical radius coincides with
its operator norm and spectral radius; that is,
\[
w(T_{\theta,r})
=
\norm{T_{\theta,r}}
=
r(T_{\theta,r}).
\]
Consequently,
\[
w^{2r}(A)
\leq
\frac14
\norm{
|A|^{4r\theta}
+
|A^*|^{4r(1-\theta)}
}
+
\frac12\,
r\!\left(
|A|^{2r\theta}|A^*|^{2r(1-\theta)}
\right),
\]
which proves the result.
\end{proof}

\medskip

Taking $\theta=\frac12$ in Proposition~\ref{thm:spectral} yields the following symmetric spectral-radius formulation.

\begin{cor}\label{cor:spectral-dominance}
Let $A\in\BH$ and $r\geq1$. Suppose that $|A|^r|A^*|^r$
is normal. Then
\[
w^{2r}(A)
\leq
\frac14\norm{|A|^{2r}+|A^*|^{2r}}
+
\frac12\,r\!\left(|A|^r|A^*|^r\right).
\]
\end{cor}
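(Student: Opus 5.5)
The plan is to read Corollary~\ref{cor:spectral-dominance} as the symmetric case $\theta=\tfrac12$ of Proposition~\ref{thm:spectral}, so almost no new work is needed. Fix $A\in\BH$ and $r\geq1$, and take $\theta=\tfrac12$. Then $2r\theta=2r(1-\theta)=r$ and $4r\theta=4r(1-\theta)=2r$, so the auxiliary operator becomes
\[
T_{1/2,r}=|A|^{r}|A^*|^{r}.
\]
This is exactly the operator assumed normal in the corollary, so the hypothesis of Proposition~\ref{thm:spectral} holds. Applying that proposition with $\theta=\tfrac12$ and substituting the exponents gives
\[
w^{2r}(A)\leq\frac14\norm{|A|^{2r}+|A^*|^{2r}}+\frac12\,r\!\left(|A|^r|A^*|^r\right),
\]
which is the claim.

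For a self-contained route, start from Corollary~\ref{cor:theta-half}, which bounds $w^{2r}(A)$ by the same norm term plus $\tfrac12 w(|A|^r|A^*|^r)$. Then use the fact that for a normal operator $T$ one has $w(T)=\norm{T}=r(T)$. Applying this to $T=|A|^r|A^*|^r$ replaces the numerical-radius term by the spectral radius.

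There is no real obstacle. The only point to check is that the exponent substitution is exact, so that no extra operator appears. The normality hypothesis is exactly what allows $w$ to be replaced by $r$ in an upper bound, since in general only $r(T)\leq w(T)$ holds, which goes the wrong way.
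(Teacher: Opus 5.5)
Your proposal is correct and matches the paper's proof: setting $\theta=\tfrac12$ in Proposition~\ref{thm:spectral} turns the auxiliary operator into $|A|^r|A^*|^r$ and both exponents $4r\theta$, $4r(1-\theta)$ into $2r$, and that substitution is the whole argument. Your alternative route through Corollary~\ref{cor:theta-half} together with $w(T)=r(T)$ for normal $T$ is the same argument, since it is just the proof of Proposition~\ref{thm:spectral} specialized to $\theta=\tfrac12$.
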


\begin{proof}
Taking $\theta=\frac12$ in Proposition~\ref{thm:spectral}, we obtain
\[
|A|^{2r\theta}|A^*|^{2r(1-\theta)}
=
|A|^r|A^*|^r.
\]
Hence
\[
w^{2r}(A)
\leq
\frac14\norm{|A|^{2r}+|A^*|^{2r}}
+
\frac12\,r\!\left(|A|^r|A^*|^r\right),
\]
as required.
\end{proof}

\begin{rem}
The normality assumption in Corollary~\ref{cor:spectral-dominance}
ensures that
\[
r\!\left(|A|^r|A^*|^r\right)
=
w\!\left(|A|^r|A^*|^r\right)
=
\norm{|A|^r|A^*|^r}.
\]
Hence, under this assumption, the numerical-radius term appearing in
Corollary~\ref{cor:theta-half} can equivalently be expressed in terms
of the spectral radius.

In particular, if $A$ is normal, then $|A|=|A^*|$, and therefore
$|A|^r|A^*|^r=|A|^{2r}$
is positive and hence normal. Moreover, if $A$ is normal, then equality holds in
Corollary~\ref{cor:spectral-dominance}.
\end{rem}

\medskip

\begin{rem}\label{rem:spectral-numerical}
For every $T\in\BH$,
\[
r(T)\leq w(T)\leq\norm{T}.
\]
However, the inequality $r(T)\leq w(T)$ alone does not permit the
numerical-radius term on the right-hand side of an upper estimate to
be replaced by the spectral radius, since such a replacement would
decrease the right-hand side. Therefore, a spectral-radius version of
Theorem~\ref{thm:main} requires an additional condition ensuring
\[
r(T)=w(T).
\]
The normality assumption in Proposition~\ref{thm:spectral} provides
one such sufficient condition.
\end{rem}
\medskip

We now compare Proposition~\ref{thm:spectral} with classical inequalities involving
the operator norm.

\begin{cor}\label{cor:classical}
Let $A\in\BH$ and $r\geq1$. Suppose that
$|A|^r|A^*|^r$ is normal. Then
\[
w^{2r}(A)
\leq
\frac14\norm{|A|^{2r}+|A^*|^{2r}}
+
\frac12\norm{|A|^r|A^*|^r}.
\]
\end{cor}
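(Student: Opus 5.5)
The plan is to obtain Corollary~\ref{cor:classical} as a direct consequence of Corollary~\ref{cor:spectral-dominance}, using one elementary fact: for every $T\in\BH$ one has $r(T)\leq\norm{T}$. I will set $T=|A|^r|A^*|^r$, which is normal by hypothesis.

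First, Corollary~\ref{cor:spectral-dominance} applies verbatim under the normality assumption and gives
\[
w^{2r}(A)\leq\frac14\norm{|A|^{2r}+|A^*|^{2r}}+\frac12\,r(T).
\]
Second, since $T$ is normal, $r(T)=\norm{T}$. Even without normality, $r(T)\leq\norm{T}$ always holds, and that inequality is enough here. Replacing $r(T)$ by $\norm{T}$ can only increase the right-hand side, so the claimed bound follows.

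An alternative route avoids the normality hypothesis altogether. Take $\theta=\tfrac12$ in Corollary~\ref{cor:norm-bound}; this yields the same inequality for every $A$, and it also follows from Corollary~\ref{cor:theta-half} together with $w(T)\leq\norm{T}$. I would mention this in passing, which shows that normality is not actually needed for this statement. Its role is only to make the three quantities $r(T)$, $w(T)$ and $\norm{T}$ coincide, so that the spectral form and the norm form of the bound are identical.

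There is no genuine obstacle. The only point needing care is the direction of the inequality. Here the spectral radius is replaced by the larger quantity $\norm{T}$ on the right-hand side, which is legitimate. The opposite replacement, discussed in Remark~\ref{rem:spectral-numerical}, is the one that would not be allowed.
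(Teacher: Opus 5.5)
Your proof is correct and is essentially the paper's own argument: apply Corollary~\ref{cor:spectral-dominance}, then use $r(T)\leq\norm{T}$ with $T=|A|^r|A^*|^r$. Your side observation is also right: setting $\theta=\frac12$ in Corollary~\ref{cor:norm-bound} gives the same bound for every $A\in\BH$, so the normality hypothesis is not needed for this statement.
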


\begin{proof}
By Corollary~\ref{cor:spectral-dominance},
\[
w^{2r}(A)
\leq
\frac14\norm{|A|^{2r}+|A^*|^{2r}}
+
\frac12\,r\!\left(|A|^r|A^*|^r\right).
\]
Since $r(T)\leq\norm{T}$ for every $T\in\BH$, it follows that
\[
r\!\left(|A|^r|A^*|^r\right)
\leq
\norm{|A|^r|A^*|^r}.
\]
Therefore,
\[
w^{2r}(A)
\leq
\frac14\norm{|A|^{2r}+|A^*|^{2r}}
+
\frac12\norm{|A|^r|A^*|^r},
\]
which proves the result.
\end{proof}

\medskip

We conclude this section with a conceptual remark.

\begin{rem}
The preceding results illustrate how spectral information associated with
auxiliary operators constructed from $|A|$ and $|A^*|$ can enter numerical
radius estimates under suitable structural assumptions. In particular, when
the relevant auxiliary operator is normal, its numerical radius coincides
with its spectral radius, allowing the corresponding term in the estimate
to be expressed purely in spectral terms. This provides a useful connection
between numerical-radius estimates and the spectral properties of operators
derived from $|A|$ and $|A^*|$.
\end{rem}
\section{Operator Matrix Applications}

Numerical radius inequalities for operator matrices have attracted considerable
attention due to their applications in block operator theory, system theory, and
matrix analysis.
In particular, $2 \times 2$ operator matrices naturally arise in the study of
commutators, Jordan blocks, and coupled systems.
Several sharp bounds for the numerical radius of such matrices have been obtained
by Kittaneh, Alomari, and Bhunia--Paul, typically expressed in terms of the
operator norms or symmetric combinations of the blocks. In this section, we apply the weighted estimates developed above to
off-diagonal $2\times2$ operator matrices. This yields parameterized bounds expressed in terms of the moduli of the individual blocks.

\medskip

We begin with the basic off-diagonal operator matrix, which plays a central role
in numerical radius theory.

\begin{thm}\label{thm:block}
Let $A,B\in\BH$ and let
\[
T=
\begin{pmatrix}
0 & A\\
B & 0
\end{pmatrix}
\]
act on $\mathcal H\oplus\mathcal H$. Then, for every $r\geq1$ and
$0\leq\theta\leq1$,
\[
\begin{aligned}
w^{2r}(T)
\leq {}&
\frac14
\max\Big\{
\norm{|B|^{4r\theta}+|A^*|^{4r(1-\theta)}},
\norm{|A|^{4r\theta}+|B^*|^{4r(1-\theta)}}
\Big\} +
\frac12
\max\Big\{
w\!\left(|B|^{2r\theta}|A^*|^{2r(1-\theta)}\right),
w\!\left(|A|^{2r\theta}|B^*|^{2r(1-\theta)}\right)
\Big\}.
\end{aligned}
\]
\end{thm}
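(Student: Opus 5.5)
The plan is to apply Theorem~\ref{thm:main} directly to the operator $T$ on $\mathcal H\oplus\mathcal H$ and then compute every ingredient blockwise. First I compute the moduli of $T$ and $T^*$. Since $T^*=\begin{pmatrix}0&B^*\\ A^*&0\end{pmatrix}$, we get
\[
T^*T=\begin{pmatrix}B^*B&0\\0&A^*A\end{pmatrix},\qquad
TT^*=\begin{pmatrix}AA^*&0\\0&BB^*\end{pmatrix}.
\]
The continuous functional calculus for a block-diagonal positive operator acts entrywise on the diagonal blocks. Hence
\[
|T|=\begin{pmatrix}|B|&0\\0&|A|\end{pmatrix},\qquad |T^*|=\begin{pmatrix}|A^*|&0\\0&|B^*|\end{pmatrix}.
\]
For every $s\geq0$ this gives $|T|^{s}=|B|^{s}\oplus|A|^{s}$ and $|T^*|^{s}=|A^*|^{s}\oplus|B^*|^{s}$, with the convention $X^0=I$ covering the endpoints $\theta=0,1$.

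Next I substitute into Theorem~\ref{thm:main} with $A$ replaced by $T$. The first term becomes
\[
|T|^{4r\theta}+|T^*|^{4r(1-\theta)}
=\big(|B|^{4r\theta}+|A^*|^{4r(1-\theta)}\big)\oplus\big(|A|^{4r\theta}+|B^*|^{4r(1-\theta)}\big).
\]
The product in the second term is also block diagonal, since products of block-diagonal operators multiply blockwise:
\[
|T|^{2r\theta}|T^*|^{2r(1-\theta)}
=\big(|B|^{2r\theta}|A^*|^{2r(1-\theta)}\big)\oplus\big(|A|^{2r\theta}|B^*|^{2r(1-\theta)}\big).
\]

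Finally I use two standard facts about direct sums: $\|X\oplus Y\|=\max\{\|X\|,\|Y\|\}$ and $w(X\oplus Y)=\max\{w(X),w(Y)\}$.

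The only step needing an argument is the numerical-radius identity. For a unit vector $(x,y)$ we have
\[
\langle (X\oplus Y)(x,y),(x,y)\rangle=\langle Xx,x\rangle+\langle Yy,y\rangle .
\]
Its modulus is at most $w(X)\|x\|^2+w(Y)\|y\|^2\leq\max\{w(X),w(Y)\}$, since $\|x\|^2+\|y\|^2=1$. The reverse inequality follows by taking $y=0$ or $x=0$.

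Inserting these two identities into the bound of Theorem~\ref{thm:main} yields exactly the stated inequality. I expect no genuine obstacle. The points requiring care are justifying the blockwise functional calculus, for example via polynomial approximation of $t\mapsto t^{s}$ on the spectrum, and keeping the orders of the blocks straight, since $|T|$ has $|B|$ in the first slot while $|T^*|$ has $|A^*|$ there.
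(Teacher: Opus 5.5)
Your proposal is correct and follows the paper's proof essentially step for step. You compute $|T|=|B|\oplus|A|$ and $|T^*|=|A^*|\oplus|B^*|$, apply Theorem~\ref{thm:main} to $T$, and then use $\|X\oplus Y\|=\max\{\|X\|,\|Y\|\}$ and $w(X\oplus Y)=\max\{w(X),w(Y)\}$; your short justifications of the direct-sum numerical-radius identity and of the blockwise functional calculus fill in points the paper only asserts.
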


\begin{proof}
For
\[
T=
\begin{pmatrix}
0 & A\\
B & 0
\end{pmatrix},
\]
we have

\[
T^*T=
\begin{pmatrix}
B^*B & 0\\
0 & A^*A
\end{pmatrix},
\qquad
TT^*=
\begin{pmatrix}
AA^* & 0\\
0 & BB^*
\end{pmatrix}.
\]
Therefore,
\[
|T|=
\begin{pmatrix}
|B| & 0\\
0 & |A|
\end{pmatrix},
\qquad
|T^*|=
\begin{pmatrix}
|A^*| & 0\\
0 & |B^*|
\end{pmatrix}.
\]
Consequently,
\[
|T|^{4r\theta}+|T^*|^{4r(1-\theta)}
=
\begin{pmatrix}
|B|^{4r\theta}+|A^*|^{4r(1-\theta)} & 0\\
0 & |A|^{4r\theta}+|B^*|^{4r(1-\theta)}
\end{pmatrix},
\]
and
\[
|T|^{2r\theta}|T^*|^{2r(1-\theta)}
=
\begin{pmatrix}
|B|^{2r\theta}|A^*|^{2r(1-\theta)} & 0\\
0 & |A|^{2r\theta}|B^*|^{2r(1-\theta)}
\end{pmatrix}.
\]

Applying Theorem~\ref{thm:main} to $T$ gives
\[
\begin{aligned}
w^{2r}(T)
\leq {}&
\frac14
\norm{|T|^{4r\theta}+|T^*|^{4r(1-\theta)}}+
\frac12
w\!\left(
|T|^{2r\theta}|T^*|^{2r(1-\theta)}
\right).
\end{aligned}
\]
For block-diagonal operators,
\[
\norm{
\begin{pmatrix}
X&0\\0&Y
\end{pmatrix}}
=
\max\{\norm{X},\norm{Y}\},
\]
and
\[
w\!\left(
\begin{pmatrix}
X&0\\0&Y
\end{pmatrix}
\right)
=
\max\{w(X),w(Y)\}.
\]
Substituting the preceding block expressions yields the desired
inequality.
\end{proof}

\medskip

The following corollary gives a particularly clean estimate in the symmetric
case.

\begin{cor}\label{cor:symmetric}
Let $A\in\BH$ and define
\[
T=
\begin{pmatrix}
0 & A\\
A & 0
\end{pmatrix}
\quad\text{on }\mathcal H\oplus\mathcal H.
\]
Then, for every $r\geq1$,
\[
w^{2r}(T)
\leq
\frac14\norm{|A|^{2r}+|A^*|^{2r}}
+
\frac12\,w\!\left(|A|^r|A^*|^r\right).
\]
\end{cor}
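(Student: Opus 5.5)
The plan is to specialize Theorem~\ref{thm:block} to $B=A$ and $\theta=\tfrac12$. With $B=A$, the two entries inside each maximum differ only in whether $|A|$ or $|A^*|$ comes first. For $\theta=\tfrac12$ the first maximum becomes
\[
\max\Big\{\norm{|A|^{2r}+|A^*|^{2r}},\ \norm{|A|^{2r}+|A^*|^{2r}}\Big\}=\norm{|A|^{2r}+|A^*|^{2r}},
\]
since addition is commutative and both entries are literally the same operator. The second maximum becomes
\[
\max\big\{w(|A|^r|A^*|^r),\ w(|A|^r|A^*|^r)\big\}=w(|A|^r|A^*|^r),
\]
because the diagonal blocks of $|T|^{r}|T^*|^{r}$ are $|A|^r|A^*|^r$ and $|A|^r|A^*|^r$.

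I will nonetheless recheck the block computation for $B=A$ directly, rather than rely only on the general formula. We have $T^*T=\operatorname{diag}(A^*A,A^*A)$ and $TT^*=\operatorname{diag}(AA^*,AA^*)$. Hence $|T|=\operatorname{diag}(|A|,|A|)$ and $|T^*|=\operatorname{diag}(|A^*|,|A^*|)$. Functional calculus acts blockwise on block-diagonal positive operators, so
\[
|T|^{2r}+|T^*|^{2r}=\operatorname{diag}\big(|A|^{2r}+|A^*|^{2r},\,|A|^{2r}+|A^*|^{2r}\big)
\]
and
\[
|T|^r|T^*|^r=\operatorname{diag}\big(|A|^r|A^*|^r,\,|A|^r|A^*|^r\big).
\]
Now apply Corollary~\ref{cor:theta-half} to $T$. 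Use the facts, recorded in the proof of Theorem~\ref{thm:block}, that for block-diagonal operators the norm and the numerical radius are the maxima over the blocks. This gives the claimed bound.

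There is essentially no obstacle. The only point needing care is that the two diagonal blocks coincide, so no order-reversal argument is required. If one preferred the other ordering, $w(X)=w(X^*)$ with $X^*=|A^*|^r|A|^r$ settles it anyway.
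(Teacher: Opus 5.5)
Your proposal is correct and matches the paper's proof: specialize Theorem~\ref{thm:block} to $B=A$ and $\theta=\tfrac12$, then collapse the two identical entries in each maximum. Your direct blockwise recomputation via Corollary~\ref{cor:theta-half} is a harmless redundancy. One small wording point: with $B=A$, the two entries in each maximum are literally identical for every $\theta$, not merely equal up to the order of $|A|$ and $|A^*|$.
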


\begin{proof}
Taking $B=A$ and $\theta=\frac12$ in
Theorem~\ref{thm:block}, we obtain
\[
\begin{aligned}
w^{2r}(T)
\leq {}&
\frac14
\max\left\{
\norm{|A|^{2r}+|A^*|^{2r}},
\norm{|A|^{2r}+|A^*|^{2r}}
\right\}+
\frac12
\max\left\{
w\!\left(|A|^r|A^*|^r\right),
w\!\left(|A|^r|A^*|^r\right)
\right\}.
\end{aligned}
\]
Therefore,
\[
w^{2r}(T)
\leq
\frac14\norm{|A|^{2r}+|A^*|^{2r}}
+
\frac12\,w\!\left(|A|^r|A^*|^r\right),
\]
which proves the result.
\end{proof}
\medskip

We next obtain a corresponding spectral-radius formulation under
normality assumptions on the auxiliary block operators.

\begin{prop}\label{thm:block-spectral}
Let $A,B\in\BH$, let $r\geq1$, and let $0\leq\theta\leq1$. Define
\[
T=
\begin{pmatrix}
0 & A\\
B & 0
\end{pmatrix}.
\]
Suppose that the operators $|B|^{2r\theta}|A^*|^{2r(1-\theta)}
\quad\text{and}\quad
|A|^{2r\theta}|B^*|^{2r(1-\theta)}$
are normal. Then
\[
\begin{aligned}
w^{2r}(T)
\leq {}&
\frac14
\max\Big\{
\norm{|B|^{4r\theta}+|A^*|^{4r(1-\theta)}},
\norm{|A|^{4r\theta}+|B^*|^{4r(1-\theta)}}
\Big\}
\\
&+
\frac12
\max\Big\{
r\!\left(|B|^{2r\theta}|A^*|^{2r(1-\theta)}\right),
r\!\left(|A|^{2r\theta}|B^*|^{2r(1-\theta)}\right)
\Big\}.
\end{aligned}
\]
\end{prop}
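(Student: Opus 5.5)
The plan is to combine Theorem~\ref{thm:block} with the fact that a normal operator has equal numerical radius, norm and spectral radius. This is the same route used to pass from Theorem~\ref{thm:main} to Proposition~\ref{thm:spectral}.

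First, apply Theorem~\ref{thm:block} to $T$ with the given $r\geq1$ and $\theta\in[0,1]$. This gives the desired first term verbatim. It also gives the second term
\[
\frac12\max\Big\{w\big(|B|^{2r\theta}|A^*|^{2r(1-\theta)}\big),\,w\big(|A|^{2r\theta}|B^*|^{2r(1-\theta)}\big)\Big\}.
\]

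Second, set $X=|B|^{2r\theta}|A^*|^{2r(1-\theta)}$ and $Y=|A|^{2r\theta}|B^*|^{2r(1-\theta)}$. By hypothesis both are normal, so, exactly as in the proof of Proposition~\ref{thm:spectral}, $w(X)=\norm{X}=r(X)$ and $w(Y)=\norm{Y}=r(Y)$. Substituting these equalities into the maximum replaces each numerical radius by the corresponding spectral radius, which yields the stated inequality.

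There is no real obstacle. The only point needing care is the one stressed in Remark~\ref{rem:spectral-numerical}: one needs equality $w=r$, not just $r\le w$, and this is exactly what normality provides. Alternatively, one could observe that $|T|^{2r\theta}|T^*|^{2r(1-\theta)}=X\oplus Y$ is normal, apply Proposition~\ref{thm:spectral} to $T$, and use $r(X\oplus Y)=\max\{r(X),r(Y)\}$. The first route avoids this extra spectral identity for direct sums.
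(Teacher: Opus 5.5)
Your proposal is correct and matches the paper's proof: apply Theorem~\ref{thm:block}, then use normality of the two auxiliary operators to get $w=r$ for each, and substitute into the maximum. Your alternative also works. Apply Proposition~\ref{thm:spectral} directly to $T$: there $|T|^{2r\theta}|T^*|^{2r(1-\theta)}=X\oplus Y$, which is normal, and $r(X\oplus Y)=\max\{r(X),r(Y)\}$.
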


\begin{proof}
By Theorem~\ref{thm:block},
\[
\begin{aligned}
w^{2r}(T)
\leq {}&
\frac14
\max\Big\{
\norm{|B|^{4r\theta}+|A^*|^{4r(1-\theta)}},
\norm{|A|^{4r\theta}+|B^*|^{4r(1-\theta)}}
\Big\}
\\
&+
\frac12
\max\Big\{
w\!\left(|B|^{2r\theta}|A^*|^{2r(1-\theta)}\right),
w\!\left(|A|^{2r\theta}|B^*|^{2r(1-\theta)}\right)
\Big\}.
\end{aligned}
\]
By hypothesis, both auxiliary operators are normal. Hence, for each of them,
the numerical radius coincides with the spectral radius. Therefore,
\[
w\!\left(|B|^{2r\theta}|A^*|^{2r(1-\theta)}\right)
=
r\!\left(|B|^{2r\theta}|A^*|^{2r(1-\theta)}\right)
\]
and
\[
w\!\left(|A|^{2r\theta}|B^*|^{2r(1-\theta)}\right)
=
r\!\left(|A|^{2r\theta}|B^*|^{2r(1-\theta)}\right).
\]
Substituting these identities into the preceding estimate gives the desired
inequality.
\end{proof}

\medskip

We next obtain a corresponding spectral-radius formulation under
normality assumptions on the auxiliary block operators. Let $A,B\in\BH$, $r\geq1$, and $0\leq\theta\leq1$. Define
\[
S_1
=
|B|^{2r\theta}|A^*|^{2r(1-\theta)}
\quad\text{and}\quad
S_2
=
|A|^{2r\theta}|B^*|^{2r(1-\theta)}.
\]

\begin{rem}\label{prop:block-normality}

If $S_1$ and $S_2$ are normal, then
\[
r(S_j)=w(S_j)=\norm{S_j},
\qquad j=1,2.
\]
Consequently, the numerical-radius terms in
Theorem~\ref{thm:block} may be expressed equivalently in terms of
the corresponding spectral radii, yielding Theorem~\ref{thm:block-spectral}.
\end{rem}

\begin{proof}
Since $S_1$ and $S_2$ are normal, the spectral theorem gives
\[
r(S_j)=\norm{S_j},
\qquad j=1,2.
\]
Moreover, the numerical radius of a normal operator coincides with
its operator norm. Hence
\[
r(S_j)=w(S_j)=\norm{S_j},
\qquad j=1,2.
\]
Substituting these identities into the estimate of
Theorem~\ref{thm:block} gives precisely the spectral-radius
formulation stated in Theorem~\ref{thm:block-spectral}.
\end{proof}

\medskip

We now compare our results with classical operator norm bounds.

\begin{cor}\label{cor:block-classical}
Let $A,B\in\BH$ and let
\[
T=
\begin{pmatrix}
0 & A\\
B & 0
\end{pmatrix}
\]
act on $\mathcal H\oplus\mathcal H$. Then
\[
w(T)
\leq
\frac12\bigl(\norm{A}+\norm{B}\bigr).
\]
\end{cor}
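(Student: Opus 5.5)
The plan is to derive the bound from Theorem~\ref{thm:block} with $r=1$ and $\theta=\tfrac12$, and then replace the moduli by operator norms. Since this only yields an estimate for $w^2(T)$ that is quadratic in the norms, I will also give a direct argument that produces exactly the claimed linear bound.

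\textbf{Step 1: the estimate from Theorem~\ref{thm:block}.} With $r=1$ and $\theta=\tfrac12$, Theorem~\ref{thm:block} gives
\[
w^2(T)\le \frac14\max\bigl\{\norm{|B|^2+|A^*|^2},\norm{|A|^2+|B^*|^2}\bigr\}+\frac12\max\bigl\{w(|B||A^*|),w(|A||B^*|)\bigr\}.
\]
Using $\norm{X+Y}\le\norm X+\norm Y$, $w(XY)\le\norm X\norm Y$ and $\norm{|A|}=\norm{|A^*|}=\norm A$, the right-hand side is at most
\[
\tfrac14(\norm A^2+\norm B^2)+\tfrac12\norm A\norm B.
\]
This equals $\bigl(\tfrac12(\norm A+\norm B)\bigr)^2$. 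Taking square roots would already finish the proof, but the direct argument below is cleaner and independent of the bookkeeping in Step~1.

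\textbf{Step 2: the direct argument.} Take a unit vector $x\oplus y$, so $\norm x^2+\norm y^2=1$. Then
\[
|\langle T(x\oplus y),x\oplus y\rangle|=|\langle Ay,x\rangle+\langle Bx,y\rangle|\le(\norm A+\norm B)\norm x\norm y .
\]
Since $\norm x\norm y\le\tfrac12(\norm x^2+\norm y^2)=\tfrac12$, we get $|\langle T(x\oplus y),x\oplus y\rangle|\le\tfrac12(\norm A+\norm B)$. Taking the supremum over all unit vectors gives the result.

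Neither route has a real obstacle. The only point needing care in Step~1 is checking that the algebra indeed collapses to a perfect square.
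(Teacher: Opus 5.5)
Your Step~2 is exactly the paper's proof: expand $\langle T(x\oplus y),x\oplus y\rangle=\langle Ay,x\rangle+\langle Bx,y\rangle$, bound it by $(\norm{A}+\norm{B})\norm{x}\norm{y}$, and use $2\norm{x}\norm{y}\le 1$.

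Your Step~1 is also correct, and it is a genuinely different route. It is the only one of your two arguments that actually derives the statement as a corollary of Theorem~\ref{thm:block}; the paper's proof never uses that theorem. Your identity $\tfrac14(\norm{A}^2+\norm{B}^2)+\tfrac12\norm{A}\norm{B}=\tfrac14(\norm{A}+\norm{B})^2$ also shows that the block estimate at $r=1$, $\theta=\tfrac12$ is always at least as sharp as the classical bound. The cost is that Step~1 inherits the whole Kato--McCarthy--Buzano machinery, whereas the direct argument is elementary and self-contained.
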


\begin{proof}
Let $x\oplus y\in\mathcal H\oplus\mathcal H$ satisfy
\[
\norm{x}^2+\norm{y}^2=1.
\]
Then
\[
\begin{aligned}
\left|
\left\langle
T(x\oplus y),x\oplus y
\right\rangle
\right|
&=
\left|
\langle Ay,x\rangle+\langle Bx,y\rangle
\right|
\\
&\leq
|\langle Ay,x\rangle|
+
|\langle Bx,y\rangle|
\\
&\leq
\norm{A}\norm{x}\norm{y}
+
\norm{B}\norm{x}\norm{y}
\\
&=
\bigl(\norm{A}+\norm{B}\bigr)
\norm{x}\norm{y}.
\end{aligned}
\]
Since $2\norm{x}\norm{y}
\leq
\norm{x}^2+\norm{y}^2
=1,$
we obtain
\[
\left|
\left\langle
T(x\oplus y),x\oplus y
\right\rangle
\right|
\leq
\frac12
\bigl(\norm{A}+\norm{B}\bigr).
\]
Taking the supremum over all unit vectors
$x\oplus y\in\mathcal H\oplus\mathcal H$ gives
\[
w(T)
\leq
\frac12
\bigl(\norm{A}+\norm{B}\bigr).
\]
\end{proof}

\medskip

We conclude this section with an interpretative remark.

\begin{rem}
The preceding results show that the weighted approach extends naturally
to off-diagonal operator matrices. In particular, the parameter $\theta$
provides a family of block-operator estimates, while under suitable
normality assumptions the corresponding auxiliary numerical-radius
terms may be expressed in terms of spectral radii. This suggests possible
extensions to more general block-operator structures.
\end{rem}

\section{Sharpness and Equality Cases}

In this section, we investigate the equality cases of the inequalities obtained
in the previous sections.
Such an analysis is essential for understanding the sharpness of the bounds and
for identifying the structural constraints imposed on operators when equality
occurs. As is common in numerical radius theory, equality is closely related
to the equality conditions in the inequalities used in the proofs,
such as Kato's generalized mixed Schwarz inequality, McCarthy's
inequality, Buzano's inequality, and the arithmetic--geometric mean
inequality.

We begin by characterizing equality in the weighted geometric mean inequality.

\medskip

\begin{thm}\label{thm:equality-main}
Let $A\in\BH$ be normal and let $r\geq1$. Then equality holds in
Theorem~\ref{thm:main} for $\theta=\frac12$. More precisely,
\[
w^{2r}(A)
=
\frac14\norm{|A|^{2r}+|A^*|^{2r}}
+
\frac12\,w\!\left(|A|^r|A^*|^r\right).
\]
\end{thm}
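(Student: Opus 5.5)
The strategy is to compute both sides of the asserted identity explicitly for a normal $A$ and check that they coincide. We do not trace equality conditions through the chain of inequalities in the proof of Theorem~\ref{thm:main}. The inequality ``$\leq$'' is already given by Corollary~\ref{cor:theta-half}. What remains is to identify the right-hand side with a quantity equal to $w^{2r}(A)$.

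\textbf{Step 1: collapse the right-hand side.} Since $A$ is normal, $A^*A=AA^*$, so $|A|=|A^*|$. Then
\[
|A|^{2r}+|A^*|^{2r}=2|A|^{2r},
\qquad
|A|^r|A^*|^r=|A|^{2r}.
\]
The operator $|A|^{2r}$ is positive, so its norm and its numerical radius both equal $\norm{|A|^{2r}}$. By the functional calculus (or the $C^*$-identity applied repeatedly), $\norm{|A|^{2r}}=\norm{|A|}^{2r}=\norm{A}^{2r}$. The right-hand side therefore becomes
\[
\tfrac14\cdot 2\norm{A}^{2r}+\tfrac12\norm{A}^{2r}=\norm{A}^{2r}.
\]

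\textbf{Step 2: identify the left-hand side.} For a normal operator, $w(A)=\norm{A}$, as recalled in the introduction. Hence $w^{2r}(A)=\norm{A}^{2r}$, and the two sides agree.

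\textbf{Main obstacle.} The only step that needs care is the identity $\norm{|A|^{2r}}=\norm{A}^{2r}$ for non-integer $r\geq1$. I would justify it through the continuous functional calculus on $|A|\geq0$. The spectral mapping theorem gives $\sigma(|A|^{2r})=\{t^{2r}:t\in\sigma(|A|)\}$, and for a positive operator the norm equals the maximum of its spectrum. The function $t\mapsto t^{2r}$ is increasing on $[0,\infty)$, so $\norm{|A|^{2r}}=\max\sigma(|A|)^{2r}=\norm{|A|}^{2r}$. Finally, $\norm{|A|}=\norm{A}$ because $\norm{|A|x}=\norm{Ax}$ for every $x\in\mathcal H$.
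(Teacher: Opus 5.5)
Your proposal is correct and follows essentially the same route as the paper: both arguments use $|A|=|A^*|$ to collapse the right-hand side to $\tfrac12\|A\|^{2r}+\tfrac12\|A\|^{2r}=\|A\|^{2r}$, and both use $w(A)=\|A\|$ for the left-hand side. Your spectral-mapping argument for $\||A|^{2r}\|=\|A\|^{2r}$ with non-integer $r$ supplies a step the paper states without comment.
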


\begin{proof}
Since $A$ is normal, we have $|A|=|A^*|.$
Moreover, for a normal operator, $w(A)=\norm{A}.$ Therefore,
\[
\frac14\norm{|A|^{2r}+|A^*|^{2r}}
=
\frac14\norm{2|A|^{2r}}
=
\frac12\norm{A}^{2r}.
\]
Also, $|A|^r|A^*|^r=|A|^{2r}$ and since $|A|^{2r}$ is positive,
\[
w\!\left(|A|^r|A^*|^r\right)
=
w(|A|^{2r})
=
\norm{|A|^{2r}}
=
\norm{A}^{2r}.
\]
Consequently,
\[
\begin{aligned}
\frac14\norm{|A|^{2r}+|A^*|^{2r}}
+
\frac12\,w\!\left(|A|^r|A^*|^r\right)
=
\frac12\norm{A}^{2r}
+
\frac12\norm{A}^{2r}
=
\norm{A}^{2r} =
w^{2r}(A).
\end{aligned}
\]
Hence equality holds in Theorem~\ref{thm:main} when
$\theta=\frac12$.
\end{proof}

\medskip

We next consider the optimization of the weighted estimate with respect to
the parameter $\theta$. Let $A\in\BH$ be invertible and, for $r\geq1$, define
\[
\Phi_{A,r}(\theta)
:=
\frac14
\norm{
|A|^{4r\theta}
+
|A^*|^{4r(1-\theta)}
}
+
\frac12
w\!\left(
|A|^{2r\theta}
|A^*|^{2r(1-\theta)}
\right),
\qquad 0\leq\theta\leq1.
\]

\begin{prop}\label{prop:rigidity}
Let $A\in\BH$ be invertible and let $r\geq1$. Then the function
$\Phi_{A,r}$ is continuous on $[0,1]$ and attains its minimum.
Consequently,
\[
w^{2r}(A)
\leq
\min_{0\leq\theta\leq1}\Phi_{A,r}(\theta)
\leq
\Phi_{A,r}\!\left(\frac12\right).
\]
\end{prop}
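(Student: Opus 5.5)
The plan is to establish continuity of $\Phi_{A,r}$ directly and then combine compactness of $[0,1]$ with Theorem~\ref{thm:main}. First, since $A$ is invertible, both $|A|$ and $|A^*|$ are positive invertible operators. Hence their spectra lie in a compact interval $[m,M]\subset(0,\infty)$ with $m>0$.

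\textbf{Step 1: continuity of the powers.} For real $s$, the function $t\mapsto t^{s}=e^{s\log t}$ is continuous on $[m,M]$. Moreover,
\[
\sup_{t\in[m,M]}|t^{s}-t^{s_0}|\to0 \quad\text{as } s\to s_0,
\]
because $\log t$ is bounded on $[m,M]$. By the continuous functional calculus, this uniform convergence gives $\norm{|A|^{s}-|A|^{s_0}}\to0$, and likewise for $|A^*|$. With $s=4r\theta$, $s=2r\theta$, $s=4r(1-\theta)$ and $s=2r(1-\theta)$, the maps
\[
\theta\mapsto |A|^{4r\theta},\quad |A^*|^{4r(1-\theta)},\quad |A|^{2r\theta},\quad |A^*|^{2r(1-\theta)}
\]
are norm-continuous on $[0,1]$. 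Invertibility is used exactly here: it makes the endpoint $\theta=0$ (or $1$) harmless, since then $|A|^{0}=I$ and no discontinuity of $t^0$ at $t=0$ arises.

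\textbf{Step 2: continuity of $\Phi_{A,r}$.} The sum of norm-continuous maps is norm-continuous. The product is also norm-continuous, since
\[
\norm{XY-X_0Y_0}\le\norm{X-X_0}\norm{Y}+\norm{X_0}\norm{Y-Y_0},
\]
and the norms stay bounded on the compact parameter set. Next, $\norm{\cdot}$ and $w(\cdot)$ are Lipschitz with respect to the operator norm, because
\[
|w(X)-w(Y)|\le w(X-Y)\le\norm{X-Y}.
\]
Composing these facts shows that $\Phi_{A,r}$ is continuous on $[0,1]$.

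\textbf{Step 3: conclusion.} By the Weierstrass extreme value theorem, $\Phi_{A,r}$ attains its minimum on the compact interval $[0,1]$. Theorem~\ref{thm:main} gives $w^{2r}(A)\le\Phi_{A,r}(\theta)$ for every $\theta\in[0,1]$, so $w^{2r}(A)$ is at most the minimum. Finally, $\theta=\tfrac12$ is admissible, which gives the last inequality.

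The only substantive step is the norm-continuity in $\theta$ in Step 1, that is, uniform convergence of $t^{s}$ on the spectrum. The rest is routine bookkeeping.
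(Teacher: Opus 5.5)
Your proposal is correct and follows the same route as the paper's proof. Both argue norm-continuity of the fractional powers via the continuous functional calculus (using invertibility) and continuity of $\norm{\cdot}$ and $w(\cdot)$, obtain the minimum by compactness of $[0,1]$, and then conclude with Theorem~\ref{thm:main} and the admissibility of $\theta=\tfrac12$. You additionally make explicit what the paper only asserts: uniform convergence of $t^{s}$ on $[m,M]$, the product estimate, and the bound $|w(X)-w(Y)|\le\norm{X-Y}$.
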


\begin{proof}
Since $A$ is invertible, both $|A|$ and $|A^*|$ are positive
invertible operators. By the continuous functional calculus, the maps
\[
\theta\longmapsto |A|^{4r\theta},
\qquad
\theta\longmapsto |A^*|^{4r(1-\theta)}
\]
are continuous in the operator norm. Similarly, $\theta\longmapsto
|A|^{2r\theta}|A^*|^{2r(1-\theta)}$
is norm continuous. Since both the operator norm and the numerical radius are continuous
with respect to the operator norm, it follows that
$\Phi_{A,r}$ is continuous on $[0,1]$. Hence, by compactness of
$[0,1]$, there exists $\theta_0\in[0,1]$ such that
\[
\Phi_{A,r}(\theta_0)
=
\min_{0\leq\theta\leq1}\Phi_{A,r}(\theta).
\]

On the other hand, Theorem~\ref{thm:main} gives $w^{2r}(A)\leq\Phi_{A,r}(\theta)$
for every $\theta\in[0,1]$. Therefore,
\[
w^{2r}(A)
\leq
\min_{0\leq\theta\leq1}\Phi_{A,r}(\theta).
\]
Since $\theta=\frac12$ is one of the admissible choices,
\[
\min_{0\leq\theta\leq1}\Phi_{A,r}(\theta)
\leq
\Phi_{A,r}\!\left(\frac12\right).
\]
Finally, substituting $\theta=\frac12$ gives
\[
\Phi_{A,r}\!\left(\frac12\right)
=
\frac14\norm{|A|^{2r}+|A^*|^{2r}}
+
\frac12w\!\left(|A|^r|A^*|^r\right),
\]
which completes the proof.
\end{proof}

\medskip

Next, we consider equality in the spectral-radius estimate.

\begin{prop}\label{thm:equality-spectral}
Let $A\in\BH$ be normal and let $r\geq1$. Then equality holds in
Proposition~\ref{thm:spectral} for $\theta=\frac12$. More precisely,
\[
w^{2r}(A)
=
\frac14\norm{|A|^{2r}+|A^*|^{2r}}
+
\frac12\,r\!\left(|A|^r|A^*|^r\right).
\]
\end{prop}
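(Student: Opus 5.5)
The plan is to reduce the statement to the already established equality in Theorem~\ref{thm:equality-main}, and then to check that replacing the numerical-radius term by the spectral-radius term changes nothing when $A$ is normal. First we verify that the hypothesis of Proposition~\ref{thm:spectral} (equivalently Corollary~\ref{cor:spectral-dominance}) holds for $\theta=\tfrac12$. Since $A$ is normal, $A^*A=AA^*$, so $|A|=|A^*|$. Hence
\[
|A|^r|A^*|^r=|A|^{2r},
\]
which is a positive operator and in particular normal. Thus Proposition~\ref{thm:spectral} applies with $\theta=\tfrac12$, and the inequality "$\leq$" is already available.

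For the reverse direction, we compute the right-hand side directly. The first term is
\[
\frac14\norm{|A|^{2r}+|A^*|^{2r}}=\frac14\norm{2|A|^{2r}}=\frac12\norm{|A|}^{2r}=\frac12\norm{A}^{2r}.
\]
Here we use $\norm{|A|^{2r}}=\norm{|A|}^{2r}$ for positive operators, which follows from the continuous functional calculus, together with $\norm{|A|}=\norm{A}$. For the second term, $|A|^{2r}$ is positive, so its spectral radius equals its norm:
\[
r\!\left(|A|^{2r}\right)=\norm{|A|^{2r}}=\norm{A}^{2r}.
\]
Adding the two terms gives $\norm{A}^{2r}$. Finally, since $A$ is normal, $w(A)=\norm{A}$, so $w^{2r}(A)=\norm{A}^{2r}$ and equality follows.

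Alternatively, one can quote Theorem~\ref{thm:equality-main} and observe that $w(|A|^{2r})=r(|A|^{2r})$, because both equal $\norm{|A|^{2r}}$ for a positive operator. The two right-hand sides then coincide.

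No step is a genuine obstacle. The only point needing care is the identity $r(P)=\norm{P}$ for a positive operator $P$, combined with $\norm{|A|^{2r}}=\norm{A}^{2r}$. Both follow from the spectral theorem: $r(P)=\max\sigma(P)=\norm{P}$, and $\sigma(|A|^{2r})=\{t^{2r}:t\in\sigma(|A|)\}$ by the spectral mapping theorem.
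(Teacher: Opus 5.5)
Your proposal is correct and follows essentially the same route as the paper: use $|A|=|A^*|$ to reduce the right-hand side to $\frac12\norm{A}^{2r}+\frac12 r(|A|^{2r})$, apply $r(P)=\norm{P}$ to the positive operator $|A|^{2r}$, and conclude with $w(A)=\norm{A}$ for normal $A$. Your preliminary check that Proposition~\ref{thm:spectral} applies is harmless but not needed, since the direct computation already gives equality.
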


\begin{proof}
Since $A$ is normal, we have $|A|=|A^*|$ and $w(A)=\norm{A}.$
Therefore,
\[
\frac14\norm{|A|^{2r}+|A^*|^{2r}}
=
\frac14\norm{2|A|^{2r}}
=
\frac12\norm{A}^{2r}.
\]
Moreover,
\[
|A|^r|A^*|^r=|A|^{2r}.
\]
Since $|A|^{2r}$ is positive,
\[
r\!\left(|A|^r|A^*|^r\right)
=
r(|A|^{2r})
=
\norm{|A|^{2r}}
=
\norm{A}^{2r}.
\]
Hence
\[
\begin{aligned}
\frac14\norm{|A|^{2r}+|A^*|^{2r}}
+
\frac12\,r\!\left(|A|^r|A^*|^r\right)
=
\frac12\norm{A}^{2r}
+
\frac12\norm{A}^{2r}
=
\norm{A}^{2r}
=
w^{2r}(A).
\end{aligned}
\]
Thus equality holds in Proposition~\ref{thm:spectral} for
$\theta=\frac12$.
\end{proof}

\medskip

We conclude this section by considering the sharpness of the block-operator
estimate. The following result shows that the estimate in
Theorem~\ref{thm:block} is attained for a natural class of symmetric
off-diagonal operator matrices generated by normal operators. Let us define \[
T=
\begin{pmatrix}
0 & A\\
A & 0
\end{pmatrix}
\]
on $\mathcal H\oplus\mathcal H$.

\begin{prop}\label{thm:block-equality}
Let $A\in\BH$ be normal, let $r\geq1.$
Then,
\[
w^{2r}(T)
=
\frac14\norm{|A|^{2r}+|A^*|^{2r}}
+
\frac12\,w\!\left(|A|^r|A^*|^r\right)
=
\norm{A}^{2r}.
\]
\end{prop}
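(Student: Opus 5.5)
The plan is to prove the two equalities separately: first evaluate the right-hand side explicitly using the normality of $A$, then compute $w(T)$ directly and show it equals $\norm{A}$.

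\emph{The right-hand side.} Since $A$ is normal, $|A|=|A^*|$. Hence $|A|^{2r}+|A^*|^{2r}=2|A|^{2r}$ and $|A|^r|A^*|^r=|A|^{2r}$. The operator $|A|^{2r}$ is positive, so its numerical radius equals its norm, which is $\norm{A}^{2r}$. The right-hand side is therefore
\[
\tfrac14\cdot 2\norm{A}^{2r}+\tfrac12\norm{A}^{2r}=\norm{A}^{2r},
\]
exactly as in the proof of Theorem~\ref{thm:equality-main}.

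\emph{The left-hand side.} We need $w(T)=\norm{A}$. Corollary~\ref{cor:symmetric}, combined with the computation above, gives $w^{2r}(T)\le\norm{A}^{2r}$, that is, $w(T)\le\norm{A}$. Alternatively, Corollary~\ref{cor:block-classical} with $B=A$ gives the same bound.

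For the reverse inequality, observe that $T$ is normal. Indeed,
\[
T^*T=\operatorname{diag}(A^*A,A^*A), \qquad TT^*=\operatorname{diag}(AA^*,AA^*),
\]
and these coincide because $AA^*=A^*A$. So $w(T)=\norm{T}$. Moreover $T^*T=\operatorname{diag}(|A|^2,|A|^2)$, hence $\norm{T}^2=\norm{A}^2$ and $w(T)=\norm{A}$.

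A second route avoids normality of $T$. Take unit vectors $y_n$ with $|\langle Ay_n,y_n\rangle|\to w(A)=\norm{A}$, and test with $x=\tfrac{1}{\sqrt2}(y_n\oplus y_n)$. This gives $\langle Tx,x\rangle=\langle Ay_n,y_n\rangle$, so $w(T)\ge w(A)=\norm{A}$.

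Combining the two bounds gives $w^{2r}(T)=\norm{A}^{2r}$, which equals the right-hand side. The argument has no real obstacle. The only point needing care is the lower bound $w(T)\ge\norm{A}$, and either the normality of $T$ or the test-vector argument settles it cleanly.
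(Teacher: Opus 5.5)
Your proof is correct. Your evaluation of the right-hand side is the same as the paper's; the difference is in how you compute $w(T)$.

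The paper conjugates by the unitary $U=\frac{1}{\sqrt2}\begin{pmatrix} I&I\\ I&-I\end{pmatrix}$ to get $U^*TU=A\oplus(-A)$. This gives $w(T)=w(A)$ for \emph{every} $A\in\BH$, and normality is used only afterwards, to write $w(A)=\norm{A}$. You instead check that $T^*T=\operatorname{diag}(A^*A,A^*A)$ and $TT^*=\operatorname{diag}(AA^*,AA^*)$, so $T$ is normal exactly when $A$ is. Then $w(T)=\norm{T}=\norm{T^*T}^{1/2}=\norm{A}$.

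That observation already gives both inequalities, so in your first route the upper bound via Corollary~\ref{cor:symmetric} (or Corollary~\ref{cor:block-classical}) is redundant. The upper bound is only needed in your second, test-vector route. That route proves $w(T)\ge w(A)$ for arbitrary $A$, which is one half of the paper's identity.

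The paper's unitary-equivalence argument buys the general identity $w(T)=w(A)$. It reuses this in its last example, where $A=\begin{pmatrix}1&1\\0&1\end{pmatrix}$ is not normal. Your argument is shorter and needs no explicit unitary. It also shows that $T$ is normal if and only if $A$ is, which justifies the paper's unproved claim in that same example that $T$ is non-normal.
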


\begin{proof}
Consider the unitary operator
\[
U=
\frac{1}{\sqrt{2}}
\begin{pmatrix}
I&I\\
I&-I
\end{pmatrix}.
\]
A direct computation gives
\[
U^*TU
=
\begin{pmatrix}
A&0\\
0&-A
\end{pmatrix}.
\]
Since the numerical radius is invariant under unitary equivalence,
\[
w(T)
=
w\!\left(
\begin{pmatrix}
A&0\\
0&-A
\end{pmatrix}
\right)
=
w(A).
\]
As $A$ is normal $w(A)=\norm{A}$
and hence $w^{2r}(T)=\norm{A}^{2r}.$ On the other hand, taking $B=A$ and $\theta=\frac12$ in
Theorem~\ref{thm:block} yields
\[
w^{2r}(T)
\leq
\frac14\norm{|A|^{2r}+|A^*|^{2r}}
+
\frac12\,w\!\left(|A|^r|A^*|^r\right).
\]
Since $A$ is normal, $|A|=|A^*|$. Therefore,
\[
\frac14\norm{|A|^{2r}+|A^*|^{2r}}
=
\frac12\norm{A}^{2r},
\]
and
\[
w\!\left(|A|^r|A^*|^r\right)
=
w(|A|^{2r})
=
\norm{A}^{2r}.
\]
Consequently,
\[
\frac14\norm{|A|^{2r}+|A^*|^{2r}}
+
\frac12\,w\!\left(|A|^r|A^*|^r\right)
=
\norm{A}^{2r}
=
w^{2r}(T).
\]
Thus equality holds in Theorem~\ref{thm:block} for
$\theta=\frac12$.
\end{proof}

\medskip

Finally, we summarize the sharpness phenomena by our results.

\begin{rem}
The results of this section show that the proposed estimates are sharp on
natural classes of operators. In particular, normal operators attain equality
in the main estimate at $\theta=\frac12$, and the corresponding equality is
inherited by the symmetric off-diagonal block operators considered above.
Determining necessary and sufficient conditions for equality for general
operators, as well as understanding the dependence of equality on the
parameter $\theta$, remains an interesting problem.
\end{rem}

\section{Numerical examples illustrating sharpness} 

\begin{example}
Let
\[
A=
\begin{pmatrix}
0&2\\
0&0
\end{pmatrix}.
\]
Then
\[
|A|=
\begin{pmatrix}
0&0\\
0&2
\end{pmatrix},
\qquad
|A^*|=
\begin{pmatrix}
2&0\\
0&0
\end{pmatrix}.
\]
Since $A$ is a nilpotent operator of order two,
\[
w(A)=1,
\qquad
\norm{A}=2.
\]

Taking $r=1$ and $\theta=\frac12$ in
Theorem~\ref{thm:main}, we obtain
\[
w^2(A)
\leq
\frac14\norm{|A|^2+|A^*|^2}
+
\frac12 w(|A||A^*|).
\]
Now
\[
|A|^2+|A^*|^2
=
\begin{pmatrix}
4&0\\
0&4
\end{pmatrix}
=4I,
\]
while
\[
|A||A^*|=0.
\]
Consequently,
\[
w^2(A)
\leq
\frac14(4)
+
\frac12(0)
=1.
\]
Since $w(A)=1$, equality holds. Thus the estimate in
Theorem~\ref{thm:main} can be sharp even for a non-normal operator.
\end{example}

\medskip

\begin{example}
Let
\[
A=
\begin{pmatrix}
2&0\\
0&1
\end{pmatrix}.
\]
Then $A$ is positive and normal, and hence $|A|=|A^*|=A.$ Moreover,
$w(A)=\norm{A}=2.$
Let $r\geq1$ and choose $\theta=\frac12$. Then
\[
|A|^{2r\theta}|A^*|^{2r(1-\theta)}
=
|A|^r|A^*|^r
=
A^{2r}.
\]
Therefore, Proposition~\ref{thm:main} gives
\[
w^{2r}(A)
\leq
\frac14\norm{|A|^{2r}+|A^*|^{2r}}
+
\frac12 w\!\left(|A|^r|A^*|^r\right).
\]
Since $|A|^{2r}=|A^*|^{2r}
=
\begin{pmatrix}
2^{2r}&0\\
0&1
\end{pmatrix},$ we obtain
\[
\frac14\norm{|A|^{2r}+|A^*|^{2r}}
=
\frac12\,2^{2r},\;\;\text{and}\quad \frac12w\!\left(|A|^r|A^*|^r\right)
=
\frac12w(A^{2r})
=
\frac12\,2^{2r}.
\]

Hence
\[
w^{2r}(A)
\leq
2^{2r}.
\]
On the other hand, $w^{2r}(A)=2^{2r},$ and therefore equality holds. Furthermore, since $A^{2r}$ is positive,
\[
r(A^{2r})=w(A^{2r})=\norm{A^{2r}}=2^{2r}.
\]
Thus equality is also attained in the corresponding spectral-radius
estimate of Theorem~\ref{thm:spectral} at $\theta=\frac12$.
This illustrates the sharpness results obtained in Section~5.
\end{example}

\medskip

\begin{example}
Let $A=
\begin{pmatrix}
1&2\\
0&1
\end{pmatrix}.$ Then $A$ is invertible and non-normal. Indeed,
\[
A^*A
=
\begin{pmatrix}
1&2\\
2&5
\end{pmatrix},
\qquad
AA^*
=
\begin{pmatrix}
5&2\\
2&1
\end{pmatrix},
\]
so that $A^*A\neq AA^*$. For $r=1$, define
\[
\Phi_{A,1}(\theta)
=
\frac14
\norm{
|A|^{4\theta}
+
|A^*|^{4(1-\theta)}
}
+
\frac12
w\!\left(
|A|^{2\theta}|A^*|^{2(1-\theta)}
\right),
\qquad 0\leq\theta\leq1.
\]
By Theorem~\ref{thm:main}, $w^2(A)\leq\Phi_{A,1}(\theta),\quad$
for every $\theta\in[0,1]$. At the symmetric value $\theta=\frac12$, this becomes
\[
w^2(A)
\leq
\frac14\norm{|A|^2+|A^*|^2}
+
\frac12w(|A||A^*|),
\]
which is precisely the corresponding symmetric estimate.
Thus, even for this non-normal operator, Theorem~\ref{thm:main}
provides a family of admissible upper bounds parameterized by
$\theta$, whose optimal value may be obtained by minimizing
$\Phi_{A,1}(\theta)$ over $[0,1]$.
\end{example}

\medskip

\begin{example}
Let
\[
T=
\begin{pmatrix}
0&A\\
A&0
\end{pmatrix},
\qquad
A=
\begin{pmatrix}
1&1\\
0&1
\end{pmatrix}.
\]
Since $A$ is non-normal, the block operator $T$ is also non-normal.
Moreover, $T$ is unitarily equivalent to $A\oplus(-A)$, and hence
\[
w(T)=w(A).
\]
For the matrix $A$ above, its numerical range is the disk centered at $1$
with radius $\frac12$, so that $w(A)=\frac32.$ Therefore, $w^2(T)=\frac94.$ We now apply Theorem~\ref{thm:block} with $r=1$ and
$\theta=\frac12$. Since
\[
A^*A=
\begin{pmatrix}
1&1\\
1&2
\end{pmatrix},
\qquad
AA^*=
\begin{pmatrix}
2&1\\
1&1
\end{pmatrix},
\]
we have
\[
|A|
=
\frac1{\sqrt5}
\begin{pmatrix}
2&1\\
1&3
\end{pmatrix},
\qquad
|A^*|
=
\frac1{\sqrt5}
\begin{pmatrix}
3&1\\
1&2
\end{pmatrix}.
\]

It is easy to verify that 
\[
\norm{|A|^2+|A^*|^2}=5.
\]
Furthermore, $|A||A^*|
=
\frac15
\begin{pmatrix}
7&4\\
6&7
\end{pmatrix},$
and a direct calculation gives $w(|A||A^*|)=\frac{12}{5}.$ Hence Theorem~\ref{thm:block} yields
\[
\begin{aligned}
w^2(T)
&\leq
\frac14\norm{|A|^2+|A^*|^2}
+
\frac12 w(|A||A^*|)=
\frac54+\frac65
=
\frac{49}{20}.
\end{aligned}
\]
Since $\frac94\ < \frac{49}{20},$
the inequality is strict in this case. Thus, in contrast with the normal symmetric block operators considered in
Proposition~\ref{thm:block-equality}, the present non-normal example does
not attain equality at the symmetric parameter $\theta=\frac12$.
\end{example}

\begin{rem}
The preceding examples demonstrate that the estimates obtained above are
sharp, while also illustrating that their equality behavior depends
significantly on the structure of the operator. Normal operators attain
equality at the symmetric choice $\theta=\frac12$, whereas equality may also
occur for certain non-normal, non-invertible operators. In contrast, the
non-normal block-operator example above exhibits strict inequality. These
observations suggest that a complete characterization of the equality cases
for general operators requires a more detailed analysis of both the operator
structure and the choice of the parameter $\theta$.
\end{rem}

\section{Conclusion}

In this paper, we established a parameterized family of upper bounds for the
numerical radius of bounded linear operators using weighted expressions
involving $|A|$ and $|A^*|$. The symmetric choice $\theta=\frac12$ recovers
a known numerical-radius inequality, while optimization over $\theta$ provides
additional flexibility in the resulting estimates. We also obtained
corresponding results for $2\times2$ block operator matrices and identified
natural classes of operators for which the bounds are sharp. These results
suggest further investigation of optimal parameter choices and equality cases,
as well as extensions to other structured operator settings.

\section{Acknowledgment}  The authors are grateful to the Mohapatra Family Foundation and the College of Graduate Studies of the University of Central Florida for their support during this research. 

\section{Conflict of Interest} 
On behalf of all authors, the corresponding author states that there is no conflict of interest.

\end{document}